\newlength{\defbaselineskip}
\theoremstyle{plain}
\theoremstyle{definition}
\theoremstyle{plain}
\newtheorem{thm}{Theorem}
\newtheorem{lem}[thm]{Lemma}
\newtheorem{prop}[thm]{Proposition}
\theoremstyle{definition}
\newtheorem{assume}[thm]{Assumptions}
\numberwithin{equation}{section}
\begin{document}
\title{Plane automorphisms given by polynomials of scattered degrees}
\author{Kyungyong Lee}
\thanks{Research partially supported by NSF grant DMS 0901367.}

\address{\noindent Department of Mathematics, Wayne State University, Detroit, MI 48202, USA}
\email{klee@math.wayne.edu}

\begin{abstract}
%The author is new to the field, so feel free to make any comments, especially on references.
We study the plane automorphisms given by polynomials with certain degree decompositions.
\end{abstract}

 \maketitle
 
% \textbf{2010 Mathematics Subject Classification :} 13F60, 16G20.

%\textbf{Keywords :} Cluster algebras, quiver representations, generalized Kronecker quiver.

\section{Introduction}

The jacobian conjecture, raised by Keller \cite{Keller}, has been studied by many mathematicians: a partial list of related results includes \cite{AM},\cite{BCW},\cite{BK}\cite{NVC},\cite{CW2},\cite{Dru},\cite{EssenWZ},\cite{Gwo},\cite{Hub},\cite{LM},\cite{Mag},\cite{MU}, \cite{Moh},\cite{Nagata},\cite{Wang},\cite{Yag},\cite{Yu}.  A survey is given in \cite{Essen}. In this paper we exclusively deal with the plane case. This is the first in a series of papers that introduce a new computational approach, which is completely elementary.

Let $k$ be a field of characteristic 0, and let $R=k[x,y]$.
Throughout the paper, let $f, g \in R$ be polynomials satisfying the following :  
\begin{assume} Let $n$ be any positive integer, and let $\{d_1,...,d_n\}$ be any set of $n$ distinct positive integers with the following property :
\begin{equation}\label{0422eq1}
\text{ if }d_{i} +d_{j} = d_{p}+d_{q}\text{ for }(\text{not necessarily distinct})\,\, i,j,p,q,  \text{ then }\{d_i,d_j\}=\{d_p,d_q\}.
\end{equation}
For each $d_i\in \{d_1,...,d_n\}$, let $f_{d_i}$ and $g_{d_i}$ be homogeneous polynomials of degree $d_i$ in $R$, and assume that at least one of $f_{d_i}$ and $g_{d_i}$ is nonzero. Let $f=\sum_{i=1}^n f_{d_i}$ and $g=\sum_{i=1}^n g_{d_i}$.  Let $$J=\frac{\partial f}{\partial x} \frac{\partial g}{\partial y}  - \frac{\partial f}{\partial y} \frac{\partial g}{\partial x}=\sum_{1\leq i,j\leq n} \left(\frac{\partial f_{d_i}}{\partial x} \frac{\partial g_{d_j}}{\partial y}  - \frac{\partial f_{d_i}}{\partial y} \frac{\partial g_{d_j}}{\partial x}\right).$$%%$$J=\det\left(\begin{array}{cc} \frac{\partial f}{\partial x} & \frac{\partial f}{\partial y}\\ & \\ \frac{\partial g}{\partial x} & \frac{\partial g}{\partial y}  \end{array}  \right).$$
Denote by $J_{i,j}$ the coefficient of $x^{i}y^{j}$ in $J$.
\end{assume}

\begin{thm}\label{mainthm}
If $J\in k\setminus \{0\}$, then $k[x,y]=k[f,g]$.
\end{thm}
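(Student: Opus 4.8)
The plan is to use the scattered-degree hypothesis \eqref{0422eq1} to break the single scalar condition $J\in k\setminus\{0\}$ into many independent homogeneous equations, and then to show that these are rigid enough to put $f,g$ into a twisted-triangular form from which $x,y$ can be recovered explicitly. Write $[P,Q]:=\frac{\partial P}{\partial x}\frac{\partial Q}{\partial y}-\frac{\partial P}{\partial y}\frac{\partial Q}{\partial x}$, so that $[f_{d_i},g_{d_j}]$ is homogeneous of degree $d_i+d_j-2$. By \eqref{0422eq1} the value $d_i+d_j$ is attained by a unique unordered pair, so the homogeneous components of $J$ are the sums $[f_{d_i},g_{d_j}]+[f_{d_j},g_{d_i}]$ (for $i\neq j$) and the diagonal brackets $[f_{d_i},g_{d_i}]$, all lying in distinct degrees. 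Since $J$ is a nonzero constant, each positive-degree component must vanish while the constant term is nonzero. A degree-$0$ contribution can only come from $d_i=d_j=1$, so $1$ occurs among the $d_i$ and $J=[f_1,g_1]\neq 0$; in particular the linear parts are independent, and after replacing $f,g$ by suitable $k$-linear combinations (which changes neither $k[f,g]$ nor \eqref{0422eq1} and only scales $J$ by a nonzero constant) I may assume $f_1=x$, $g_1=y$.

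Next I read off the higher parts. For each $i$ with $d_i\geq 2$, the vanishing of $[f_{d_i},g_{d_i}]$ forces $f_{d_i}$ and $g_{d_i}$ to be proportional: two homogeneous polynomials of equal positive degree with vanishing Jacobian are linearly dependent over $k$ (in characteristic $0$ this follows from Euler's identity, which turns $[P,Q]=0$ into $P\,\partial_x Q-Q\,\partial_x P=0$ and its $y$-analogue, forcing $Q/P$ to be constant). Write $f_{d_i}=\lambda_i h_i$, $g_{d_i}=\mu_i h_i$ with $h_i\neq 0$ (as at least one of $f_{d_i},g_{d_i}$ is nonzero). The mixed conditions with the linear part read $[f_1,g_{d_j}]+[f_{d_j},g_1]=\partial_y g_{d_j}+\partial_x f_{d_j}=0$, i.e. $\lambda_j\,\partial_x h_j+\mu_j\,\partial_y h_j=0$. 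This first-order operator annihilates $u_j:=\mu_j x-\lambda_j y$ and admits a complementary linear coordinate, so its homogeneous kernel in degree $d_j$ is one-dimensional; thus $h_j=c_j u_j^{d_j}$ with $c_j\neq 0$.

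The crux is to align these forms. For $i,j\geq 2$ a direct computation with $h_i=c_iu_i^{d_i}$ and $h_j=c_ju_j^{d_j}$ gives
\[
[f_{d_i},g_{d_j}]+[f_{d_j},g_{d_i}]=d_i d_j\,c_i c_j\,(\lambda_i\mu_j-\lambda_j\mu_i)^2\,u_i^{d_i-1}u_j^{d_j-1},
\]
so its vanishing forces $\lambda_i\mu_j-\lambda_j\mu_i=0$ for all such $i,j$. Hence all the vectors $(\lambda_i,\mu_i)$ are proportional to a single $(\lambda,\mu)\neq 0$, and every $u_j$ is a scalar multiple of one linear form $u:=\mu x-\lambda y$. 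Writing $f_{d_j}=a_ju^{d_j}$ and $g_{d_j}=b_ju^{d_j}$, proportionality of $(\lambda_j,\mu_j)$ with $(\lambda,\mu)$ gives $\mu a_j-\lambda b_j=0$ for every $j$, so $\mu f-\lambda g=(\mu x-\lambda y)+\sum_{j\geq 2}(\mu a_j-\lambda b_j)u^{d_j}=u$. Therefore $u\in k[f,g]$, whence $u^{d_j}\in k[f,g]$ and finally $x=f-\sum_j a_ju^{d_j}$ and $y=g-\sum_j b_ju^{d_j}$ lie in $k[f,g]$, giving $k[x,y]=k[f,g]$.

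I expect the third step to be the main obstacle. The proportionality and transport statements are local in each degree and routine in characteristic $0$, but concluding that the separately produced directions $u_j$ all coincide is what genuinely uses \eqref{0422eq1}: because each $d_i+d_j$ is realized only once there is no cancellation between different pairs to rescue a misaligned direction, and the squared factor $(\lambda_i\mu_j-\lambda_j\mu_i)^2$ then pins every $u_j$ to a common $u$. This rigidity, rather than any structure theorem for the automorphism group of the plane, is what powers the explicit recovery of $x$ and $y$.
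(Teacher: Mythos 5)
Your argument is correct, and it reaches the theorem by a genuinely different route than the paper. Both proofs exploit \eqref{0422eq1} in the same way at the outset -- the homogeneous components of $J$ split into the separate conditions $[f_{d_i},g_{d_j}]+[f_{d_j},g_{d_i}]=0$ -- but from there the paper stays entirely at the level of coefficients: it encodes the mixed conditions with the linear part as the relations \eqref{0422eq2}, reduces the theorem to the vanishing of all $2\times 2$ minors of the coefficient matrix \eqref{0424matrix}, and proves that vanishing by a double induction on minors (Lemmas~\ref{0425lem1} and~\ref{0425lem2} for a single degree, then the $B_{(i,j)}$ computation across degrees), finishing with a three-case analysis. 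You instead work intrinsically with the bracket: Euler's identity gives proportionality $f_{d_i}=\lambda_i h_i$, $g_{d_i}=\mu_i h_i$ from the diagonal conditions; the condition $\partial_x f_{d_j}+\partial_y g_{d_j}=0$ forces $h_j=c_j(\mu_j x-\lambda_j y)^{d_j}$ (this is exactly what the vanishing of the minors of $A_p$ in Proposition~\ref{prop1} expresses in coordinates, namely that the coefficient matrix has rank one); and your identity $[f_{d_i},g_{d_j}]+[f_{d_j},g_{d_i}]=d_id_jc_ic_j(\lambda_i\mu_j-\lambda_j\mu_i)^2u_i^{d_i-1}u_j^{d_j-1}$, whose perfect-square factor rules out cancellation between the two cross terms, replaces the entire $B_{(i,j)}$ induction of Section~\ref{proofsection}. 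Your final form $f=x+\sum a_ju^{d_j}$, $g=y+\sum b_ju^{d_j}$ with $\mu a_j-\lambda b_j=0$ is precisely the paper's Case 3 (with Cases 1 and 2 absorbed as the degenerate choices $\lambda=0$ or $\mu=0$), and the recovery of $x,y$ via $u=\mu f-\lambda g$ matches the paper's explicit inverse. What you gain is brevity and conceptual transparency; what the paper's coefficient-level machinery buys is a template for the announced computational program, where one manipulates the ideals generated by the $J_{a,b}$ directly. The only points worth polishing in a write-up are the degenerate readings of ``proportional'' when one of $f_{d_i},g_{d_i}$ vanishes (your $h_i\neq 0$, $(\lambda_i,\mu_i)\neq(0,0)$ convention already handles this) and an explicit remark that the normalization $f_1=x$, $g_1=y$ preserves the hypotheses and the conclusion.
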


 If $J\in k\setminus \{0\}$, then $f$ and $g$ must have linear parts, i.e., $1\in\{d_1,...,d_n\}$. Assume $d_n=1$. By linear change of variables, we can assume that $f_{d_n}=x$ and $g_{d_n}=y$. The case of $n=2$, where (\ref{0422eq1}) is trivially satisfied, has been proved in \cite{AdEs},  \cite[Corollary 6]{CW1}, \cite{CZ}, \cite{Kire},  \cite{MO} for the plane case under the assumption 
\begin{equation}\label{sqassume}
\left(\begin{array}{cc} \frac{\partial (f-x)}{\partial x} & \frac{\partial (f-x)}{\partial y}\\ & \\ \frac{\partial (g-y)}{\partial x} & \frac{\partial (g-y)}{\partial y}  \end{array}  \right)^2=0,
\end{equation}
and in \cite[Corollary 2.2]{EssenTutaj} without assuming (\ref{sqassume}). The case of  $d_1,...,d_n\leq 100$ is obtained as a special case of a result of Moh \cite{Moh}.

\section{Proof}

For any $d_i \in\{d_1,...,d_{n-1}\}$, let $f_{d_i}= \sum_{j=0}^{d_i} s_{d_i-j,j}x^{d_i-j}y^{j}$
 and $
g_{d_i}=\sum_{j=0}^{d_i} t_{d_i-j,j}x^{d_i-j}y^{j}.
$ Thanks to (\ref{0422eq1}), the coefficients of $x^{d_i-j}y^{j-1}$ in $\left(\frac{\partial f_{d_i}}{\partial x} \frac{\partial g_{d_n}}{\partial y}  - \frac{\partial f_{d_i}}{\partial y} \frac{\partial g_{d_n}}{\partial x}\right)$  and $\left(\frac{\partial f_{d_n}}{\partial x} \frac{\partial g_{d_i}}{\partial y}  - \frac{\partial f_{d_n}}{\partial y} \frac{\partial g_{d_i}}{\partial x}\right)$ are the only ones that contribute to the coefficient $J_{d_i-j, j-1}$, which is equal to 
\begin{equation}\label{0422eq2}
(d_i-j+1)s_{d_i-j+1,j-1} + j t_{d_i-j,j}
\end{equation}for $1\leq j\leq d_i$. 
Since $J\in k\setminus \{0\}$ implies
$(\ref{0422eq2})=0$, there is an element, say $c_{d_i-j+1,j}$, in $k$ such that $s_{d_i-j+1,j-1}=jc_{d_i-j+1,j}$ and $t_{d_i-j,j}=-(d_i-j+1)c_{d_i-j+1,j}.$

By letting $s_{0,d_i}=(d_i+1)c_{0,d_i+1}$ and $t_{d_i,0}=(d_i+1)c_{d_i+1,0}$, we have
$$
f= \sum_{i=1}^{n-1}\sum_{j=1}^{d_i+1} jc_{d_i-j+1,j}x^{d_i-j+1}y^{j-1} + x, \text{ and }
$$
%$$
%f= \sum_{i=1}^{d+1} ic_{d-i+1,i}x^{d-i+1}y^{i-1}  + \sum_{i=1}^{d} ic_{d-i,i}x^{d-i}y^{i-1} + x, \text{ and }
%$$
$$
g=-\sum_{i=1}^{n-1}\sum_{j=0}^{d_i} (d_i-j+1)c_{d_i-j+1,j}x^{d_i-j}y^{j}  + y.
$$
%$$
%g=-\sum_{i=0}^{d} (d-i+1)c_{d-i+1,i}x^{d-i}y^{i} -\sum_{i=0}^{d-1} (d-i)c_{d-i,i}x^{d-i-1}y^{i} + y.
%$$
In section~\ref{proofsection}, we will show that any $2\times 2$ minor of the matrix
\begin{equation}\label{0424matrix}\left(\begin{array}{cccccccc}
c_{d_i,1} & 2c_{d_i-1,2} & \cdots & (d_i+1)c_{0,d_i+1} & c_{d_j,1} & 2c_{d_j-1,2} & \cdots & (d_j+1)c_{0,d_j+1}\\
 (d_i+1)c_{d_i+1,0} & d_ic_{d_i,1} & \cdots & c_{1,d_i}  &(d_j+1)c_{d_j+1,0} & d_jc_{d_j,1} & \cdots & c_{1,d_j}
\end{array}\right)\end{equation}is equal to $0$ for any $d_i,d_j\in \{d_1,...,d_{n-1}\}$.

Suppose that all $2\times 2$ minors of (\ref{0424matrix}) are 0. Then it is straightforward to prove Theorem~\ref{mainthm} as follows.

\noindent Case 1. Suppose that $c_{d_i+1,0}=0$ for some $d_i$. Then $c_{d_i,1}=\cdots=c_{1,d_i}=0$. Since at least one of $f_{d_i}$ and $g_{d_i}$ is nonzero, $c_{0,d_i+1}\neq 0$. Then  
$c_{d_j+1,0}=c_{d_j,1}=\cdots=c_{1,d_j}=0$ for any $d_j\in \{d_1,...,d_{n-1}\}$. Again since $f_{d_j}\neq 0$ or $g_{d_j}\neq 0$, we get $c_{0,d_j+1}\neq 0$. So we have 
$$
f=\sum_{i=1}^{n-1} (d_i+1)c_{0,d_i+1}y^{d_i} +x \,\,\,\,\,\,\text{ and }\,\,\,\,\,\, g=y.
$$Then $x=f-\sum_{i=1}^{n-1} (d_i+1)c_{0,d_i+1}g^{d_i}$ and $y=g$, hence $k[f,g]=k[x,y]$.
 
\noindent Case 2. Suppose that $(d_i+1)c_{d_i+1,0}\neq 0$ and $c_{d_i,1}=0$ for some $d_i$. Then we use the same argument as in Case 1, and get $$
g=-\sum_{i=1}^{n-1} (d_i+1)c_{d_i+1,0}x^{d_i} +y \,\,\,\,\,\,\text{ and }\,\,\,\,\,\, f=x.
$$

\noindent Case 3. Suppose that $(d_i+1)c_{d_i+1,0}\neq 0$ and $c_{d_i,1}\neq 0$ for any $d_i\in \{d_1,...,d_{n-1}\}$. Then we obtain $$f_{d_i}=c_{d_i,1}\left( x +  \frac{c_{d_i,1}}{(d_i+1)c_{d_i+1,0}}y\right)^{d_i}\,\,\,\,\,\,\text{ and }\,\,\,\,\,\,g_{d_i}=-(d_i+1)c_{d_i+1,0}\left( x +  \frac{c_{d_i,1}}{(d_i+1)c_{d_i+1,0}}y\right)^{d_i}.$$ Note that $\frac{c_{d_i,1}}{(d_i+1)c_{d_i+1,0}}=\frac{c_{d_j,1}}{(d_j+1)c_{d_j+1,0}}$  for any $d_i,d_j\in \{d_1,...,d_{n-1}\}$. So
$$
f=x+\sum_{j=1}^{n-1} c_{d_j,1}\left( x +  \frac{c_{d_j,1}}{(d_j+1)c_{d_j+1,0}}y\right)^{d_j}, \,\,\,\,\,\,\text{ and }
$$
$$
g=y-\sum_{j=1}^{n-1} (d_j+1)c_{d_j+1,0}\left( x +  \frac{c_{d_j,1}}{(d_j+1)c_{d_j+1,0}}y\right)^{d_j}.
$$
Then it is easy to check that 
$$
x=f-\sum_{j=1}^{n-1} c_{d_j,1}\left( f +  \frac{c_{d_j,1}}{(d_j+1)c_{d_j+1,0}}g\right)^{d_j},\,\,\,\,\,\,\text{ and }
$$
$$
y=g+\sum_{j=1}^{n-1} (d_j+1)c_{d_j+1,0}\left( f +  \frac{c_{d_j,1}}{(d_j+1)c_{d_j+1,0}}g\right)^{d_j}.
$$

\section{The vanishing of $2\times 2$ minors of (\ref{0424matrix})}\label{proofsection}
For any $p \in \{1,...,n-1\}$, let\begin{equation*}\label{0424matrix2}
A_p=\left(\begin{array}{cccc}
c_{d_p,1} & 2c_{d_p-1,2} & \cdots & (d_p+1)c_{0,d_p+1} \\
 (d_p+1)c_{d_p+1,0} & d_pc_{d_p,1} & \cdots & c_{1,d_p} 
\end{array}\right).\end{equation*}
Let $(A_p)_{(i,j)}$ be the determinant of the submatrix of $A_p$ obtained by concatenating the $i$-th and $j$-th columns, that is, 
$$(A_p)_{(i,j)}=
\det\left(\begin{array}{cc} ic_{d_p-i+1,i} &jc_{d_p-j+1,j} \\(d_p+2-i)c_{d_p-i+2,i-1} & (d_p+2-j)c_{d_p-j+2,j-1}\end{array}\right).$$
For simplicity, let $d=d_p$, $A=A_p$ and $A_{(i,j)}=(A_p)_{(i,j)}$. 
\begin{prop}\label{prop1}
Any $2\times 2$ minor of $A$ is equal to 0.
\end{prop}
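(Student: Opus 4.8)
The plan is to recognize the matrix $A=A_p$ as the coefficient array of the pair $(f_{d_p},g_{d_p})$ and to reduce the vanishing of its $2\times 2$ minors to a single Wronskian identity coming from one homogeneous piece of $J$. Write $d=d_p$ throughout.

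First I would unwind the parametrization preceding the Proposition to match the two rows of $A$ with the coefficient vectors of $f_{d}$ and $g_{d}$. Using $s_{a,b}=(b+1)c_{a,b+1}$ and $t_{a,b}=-(a+1)c_{a+1,b}$, a direct index check shows that the top entry $i\,c_{d-i+1,i}$ of $A$ is exactly the coefficient of $x^{d-i+1}y^{i-1}$ in $f_{d}$, while the bottom entry $(d+2-i)c_{d-i+2,i-1}$ is minus the coefficient of that same monomial in $g_{d}$. Hence, up to sign, each minor $A_{(i,j)}$ is a $2\times 2$ minor of the coefficient matrix of the pair $(f_{d},g_{d})$, and so every $2\times 2$ minor of $A$ vanishes if and only if the coefficient vectors of $f_{d}$ and $g_{d}$ are proportional, i.e. $f_{d}$ and $g_{d}$ are linearly dependent over $k$. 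If one of $f_{d},g_{d}$ is zero the corresponding row of $A$ vanishes and all minors are automatically $0$, so I may assume both are nonzero.

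Next I would extract the governing constraint from the hypothesis $J\in k$. The bracketed $(i,j)$-summand of $J$ is homogeneous of degree $d_i+d_j-2$, and by (\ref{0422eq1}) the equation $d_i+d_j=2d_p$ forces $\{d_i,d_j\}=\{d_p,d_p\}$, hence $i=j=p$. Therefore the degree-$(2d_p-2)$ component of $J$ is precisely the diagonal piece $\frac{\partial f_{d_p}}{\partial x}\frac{\partial g_{d_p}}{\partial y}-\frac{\partial f_{d_p}}{\partial y}\frac{\partial g_{d_p}}{\partial x}$, and since $d_p\ge 2$ this is a positive-degree part of the constant $J$ and so vanishes identically.

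Finally I would convert this Jacobian vanishing into linear dependence by dehomogenizing. Set $F(t)=f_{d}(1,t)$ and $G(t)=g_{d}(1,t)$; the identities $\frac{\partial f_{d}}{\partial x}=x^{d-1}(dF-tF')$, $\frac{\partial f_{d}}{\partial y}=x^{d-1}F'$ (and likewise for $g_{d}$) give $\frac{\partial f_{d}}{\partial x}\frac{\partial g_{d}}{\partial y}-\frac{\partial f_{d}}{\partial y}\frac{\partial g_{d}}{\partial x}=d\,x^{2d-2}(FG'-F'G)$. Thus the vanishing from the previous step is equivalent to the \emph{Wronskian} identity $FG'-F'G=0$, which in characteristic $0$ forces $F$ and $G$ to be linearly dependent (if $F\neq 0$ then $(G/F)'=0$, so $G/F\in k$). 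Hence $f_{d}$ and $g_{d}$ are proportional, and all $2\times 2$ minors of $A$ vanish by the first step. I expect the main obstacle to be the bookkeeping in the first step—matching the subscripts of the $c_{a,b}$ to the monomials of $f_{d}$ and $g_{d}$ so that the $A_{(i,j)}$ are literally minors of the coefficient matrix—together with the clean use of (\ref{0422eq1}) to rule out any cross term $d_i+d_j=2d_p$ with $\{i,j\}\neq\{p,p\}$; once these are settled, the Wronskian step is routine.
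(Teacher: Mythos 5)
Your argument is correct, but it takes a genuinely different route from the paper's. The paper proves Proposition \ref{prop1} by a double induction on the minors themselves: Lemma \ref{0425lem2} shows $A_{(1,m)}=0$ by using (\ref{0422eq1}) to identify $J_{2d-m,m-2}$ with the corresponding coefficient of the diagonal piece $\frac{\partial f_{d}}{\partial x}\frac{\partial g_{d}}{\partial y}-\frac{\partial f_{d}}{\partial y}\frac{\partial g_{d}}{\partial x}$ and expanding it as the signed sum $-\sum_{i=1}^{m-1}(d-i+1)(m-i)A_{(i,m-i+1)}$, while Lemma \ref{0425lem1} propagates the vanishing of the first-column minors $A_{(1,j)}$ to all minors by a rank-one observation on the columns. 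You instead note that the two rows of $A$ are precisely the coefficient vectors of $f_{d}$ and $-g_{d}$, so the assertion is equivalent to linear dependence of $f_{d}$ and $g_{d}$; you then use (\ref{0422eq1}) once, to see that the whole degree-$(2d-2)$ component of the constant $J$ equals the diagonal piece (hence vanishes, since $d=d_p\ge 2$ for $p\le n-1$), and convert that into the Wronskian identity $FG'-F'G=0$ for the dehomogenizations, which in characteristic $0$ gives proportionality. The inputs are the same --- the scattered-degree condition isolating the $(p,p)$ term, and the vanishing of the coefficients $J_{2d-m,m-2}$ --- but you consume them all at once via the classical Wronskian criterion, whereas the paper extracts them one at a time and stays at the level of the $c_{a,b}$. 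Your proof is shorter and explains conceptually why the minors vanish ($f_{d_p}$ and $g_{d_p}$ are proportional, consistent with Case 3 of Section 2); the paper's version keeps everything inside the elementary coefficient calculus whose mechanics are reused for the mixed minors $B_{(i,j)}$ later in Section \ref{proofsection}, where a one-line Wronskian shortcut is no longer available. The bookkeeping you flag as the main risk (matching $i\,c_{d-i+1,i}$ and $(d+2-i)c_{d-i+2,i-1}$ to the coefficients of $x^{d-i+1}y^{i-1}$ in $f_d$ and $-g_d$, and ruling out cross terms with $d_i+d_j=2d_p$) checks out.
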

\begin{proof}
This is an immediate consequence of the following two lemmas.
\end{proof}

\begin{lem}\label{0425lem1}
Let $m$ be any positive integer $\leq d+1$. 
If $A_{(1,j)} =0$ for $j\in \{1,...,m\}$, then $A_{(i,j)}=0$ for $i,j\in \{1,...,m\}$.
\end{lem}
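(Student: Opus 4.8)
The plan is to read the columns of $A$ as vectors in $k^2$ and show that the hypothesis makes them pairwise parallel. Write $a_i=c_{d+1-i,i}$, so the $i$-th column of $A$ is $v_i=\bigl(i\,a_i,\,(d+2-i)\,a_{i-1}\bigr)$ and $A_{(i,j)}=\det(v_i\mid v_j)=i(d+2-j)a_ia_{j-1}-j(d+2-i)a_ja_{i-1}$. The engine is the elementary three-term relation satisfied by any three vectors in $k^2$: for all indices $a,b,c$ one has $A_{(b,c)}\,v_a-A_{(a,c)}\,v_b+A_{(a,b)}\,v_c=0$. Setting $a=1$ gives $A_{(b,c)}\,v_1=A_{(1,c)}\,v_b-A_{(1,b)}\,v_c$, and for $b,c\in\{1,\dots,m\}$ the right-hand side vanishes by hypothesis, so $A_{(b,c)}\,v_1=0$. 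Hence, as soon as $v_1\neq 0$, we immediately conclude $A_{(b,c)}=0$ for all $b,c\le m$, which is the assertion.

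The step I expect to be the main obstacle is the degenerate case $v_1=0$, i.e.\ $c_{d+1,0}=c_{d,1}=0$ (equivalently $a_0=a_1=0$). Here every $A_{(1,j)}$ vanishes automatically, the hypothesis carries no information, and the three-term relation is vacuous; in fact the conclusion is false for a matrix with unconstrained entries (for $d=2$, the choice $a_0=a_1=a_3=0$, $a_2=1$ gives $A_{(1,j)}=0$ for every $j$ yet $A_{(2,3)}\neq0$). To break the impasse I would feed in the extra relation hidden in the running hypothesis $J\in k\setminus\{0\}$. Because the degrees are scattered in the sense of (\ref{0422eq1}) and $d:=d_p\ge 2$, the degree-$(2d-2)$ homogeneous part of $J$ receives a contribution only from the diagonal term $\frac{\partial f_{d}}{\partial x}\frac{\partial g_{d}}{\partial y}-\frac{\partial f_{d}}{\partial y}\frac{\partial g_{d}}{\partial x}$, so this expression must vanish identically. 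Comparing the coefficient of each monomial, a short computation turns this into the family of relations
\begin{equation*}
\sum_{\substack{i<j\\ i+j=s}}(j-i)\,A_{(i,j)}=0\qquad(s=3,4,\dots,2d+1).
\end{equation*}

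With these in hand I would finish the degenerate case by an upward induction proving $a_r=0$ for every $r=0,1,\dots,d$. The base $a_0=a_1=0$ is exactly $v_1=0$. Assuming $a_0=\dots=a_{r-1}=0$ for some $2\le r\le d$, I use the relation with $s=2r+1$: every pair $(i,j)$ occurring with $i<r$ has both $a_i=0$ and $a_{i-1}=0$, so its $A_{(i,j)}$ vanishes, leaving only the top pair $(r,r+1)$, for which $A_{(r,r+1)}=r(d+1-r)a_r^2$ (the second term carries the factor $a_{r-1}=0$). Since $1\le r\le d$ forces $r(d+1-r)\neq0$, we get $a_r=0$, completing the induction. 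Once $a_0=\dots=a_d=0$, the formula $v_i=\bigl(i\,a_i,(d+2-i)a_{i-1}\bigr)$ shows $v_i=0$ for $1\le i\le d$ and leaves at most the single column $v_{d+1}$ nonzero; a matrix with at most one nonzero column has all $2\times2$ minors equal to $0$, so in particular $A_{(i,j)}=0$ for all $i,j\le m$. The difficulty is therefore concentrated entirely in recognizing that the lemma requires the diagonal Jacobian relation precisely to cover the locus $v_1=0$; away from it, the three-term relation settles everything in one line.
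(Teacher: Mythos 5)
Your argument is correct, and in the generic case it coincides with the paper's: when $v_1\neq 0$ the paper likewise deduces that the relevant columns are proportional to $v_1$ (phrased as ``$(mc_{d-m+1,m},(d+2-m)c_{d-m+2,m-1})^T$ is a multiple of $(c_{d,1},(d+1)c_{d+1,0})^T$'' inside an induction on $m$), and your three-term identity $A_{(b,c)}v_a-A_{(a,c)}v_b+A_{(a,b)}v_c=0$ merely packages this more cleanly, dispensing with the induction. The genuine divergence is the degenerate case $v_1=0$. The paper disposes of it with ``if at least one of $c_{d,1},c_{d+1,0},c_{d-m+1,m},c_{d-m+2,m-1}$ is equal to $0$, then it is easy to show that $c_{d,1}=\cdots=c_{d-m+2,m-1}=0$,'' and your example ($d=2$, $a_0=a_1=a_3=0$, $a_2=1$, i.e.\ $A_{(1,2)}=A_{(1,3)}=0$ but $A_{(2,3)}=2$) shows this claim is not deducible from the stated minor hypotheses alone: the paper is tacitly leaning on the standing assumption $J\in k\setminus\{0\}$ without saying so. You make that dependence explicit by importing the vanishing of the diagonal Jacobian coefficients $J_{2d-m,m-2}$ --- exactly the identity the paper writes down only later, in the proof of Lemma~\ref{0425lem2} --- and your reorganized form $\sum_{i<j,\,i+j=s}(j-i)A_{(i,j)}=0$ agrees with the paper's displayed formula up to the nonzero factor $d$ (I checked: pairing the $i$ and $s-i$ terms in $-\sum_i(d-i+1)(m-i)A_{(i,m-i+1)}$ yields coefficient $-d(j-i)$ on $A_{(i,j)}$). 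Under $a_0=\cdots=a_{r-1}=0$ the relation for $s=2r+1$ indeed collapses to $-dr(d+1-r)a_r^2=0$, which kills $a_r$ since $1\le r\le d$ and the characteristic is $0$; there is no circularity, since this is a direct expansion of the degree-$(2d-2)$ part of $J$ rather than an appeal to Lemma~\ref{0425lem2}, and the out-of-range pairs with $s-i>d+1$ contribute nothing because they carry $a_i$ or $a_{i-1}$ with $i<r$. In short, your route proves the same statement but supplies a complete treatment of precisely the sub-case the paper's proof leaves to the reader, at the cost of bringing the Jacobian relations into a lemma the paper presents as pure linear algebra on the matrix $A$.
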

\begin{proof}
We use induction on $m$. If $m=1$ then trivial. Suppose that the statement holds for $m-1$, and that $A_{(1,m)}=0$.  If at least one of $c_{d,1}$, $c_{d+1,0}$, $c_{d-m+1,m}$, $c_{d-m+2,m-1}$ is  equal to $0$, then it is easy to show that $c_{d,1}=\cdots=c_{d-m+2,m-1}=0$ and that $c_{d+1,0}=0$ or  $c_{d-m+1,m}=0$. If not, $\left(\begin{array}{c} mc_{d-m+1,m}\\(d+2-m)c_{d-m+2,m-1}\end{array}\right)$ is a multiple of $\left(\begin{array}{c}c_{d,1}\\(d+1)c_{d+1,0}\end{array}\right)$, so $A_{(2,m)}=\cdots=A_{(m-1,m)}=0$ follows from $A_{(2,1)}=\cdots=A_{(m-1,1)}=0$ . 
\end{proof}

\begin{lem}\label{0425lem2}
Let $m$ be any positive integer $\leq d+1$. Then $A_{(1,j)} =0$ for $j\in \{1,...,m\}$.
\end{lem}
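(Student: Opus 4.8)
The plan is to obtain Lemma~\ref{0425lem2} from a single family of linear relations among the minors $A_{(i,j)}$, produced by the vanishing of the ``self-interaction'' Jacobian of the degree-$d$ parts of $f$ and $g$, and then to bootstrap through Lemma~\ref{0425lem1}.

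First I would isolate the relevant homogeneous piece of $J$. Each summand $\frac{\partial f_{d_i}}{\partial x}\frac{\partial g_{d_j}}{\partial y}-\frac{\partial f_{d_i}}{\partial y}\frac{\partial g_{d_j}}{\partial x}$ of $J$ is homogeneous of degree $d_i+d_j-2$, so the degree-$(2d-2)$ component of $J$ is the sum over those $(i,j)$ with $d_i+d_j=2d$. Applying \eqref{0422eq1} with $d_p=d_q=d$, the only such pair is $d_i=d_j=d$, so this component is exactly
\[
\frac{\partial f_{d}}{\partial x}\frac{\partial g_{d}}{\partial y}-\frac{\partial f_{d}}{\partial y}\frac{\partial g_{d}}{\partial x},
\]
where $f_{d},g_{d}$ denote the degree-$d$ homogeneous parts of $f,g$. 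Since $J\in k\setminus\{0\}$ while $d=d_p\geq 2$ gives $2d-2>0$, this expression vanishes identically.

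Next I would extract coefficients. Writing $f_{d}=\sum_l a_l x^{d-l}y^{l}$ and $g_{d}=\sum_m b_m x^{d-m}y^{m}$, a direct expansion shows the coefficient of $x^{2d-1-s}y^{s-1}$ in the displayed Jacobian is $d\sum_{l+m=s}(m-l)a_lb_m$. Symmetrizing in $l\leftrightarrow m$ and using that the definition of the $c$'s makes $a_lb_m-a_mb_l$ equal to $-A_{(l+1,m+1)}$, the identical vanishing of the Jacobian turns into the relations
\[
\sum_{\substack{1\le i<j\le d+1\\ i+j=s+2}}(j-i)\,A_{(i,j)}=0,\qquad 1\le s\le 2d-1.
\]
The useful feature is that in the relation with $i+j=k+1$ the minor $A_{(1,k)}$ appears with the nonzero weight $k-1$, while every other minor $A_{(i,j)}$ in it satisfies $2\le i<j\le k-1$.

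Finally I would prove $A_{(1,k)}=0$ for $1\le k\le d+1$ by induction on $k$, which is exactly Lemma~\ref{0425lem2}. The base case $A_{(1,1)}=0$ is a determinant with a repeated column. For the step, assume $A_{(1,j)}=0$ for all $j\le k-1$; then Lemma~\ref{0425lem1} (with $m=k-1$) yields $A_{(i,j)}=0$ for all $i,j\le k-1$, so in the relation with $i+j=k+1$ every term except $(k-1)A_{(1,k)}$ vanishes, and since $k$ has characteristic zero we get $A_{(1,k)}=0$. I expect the main obstacle to be the index bookkeeping of the previous step, namely verifying that each interior minor in the relation genuinely has both indices at most $k-1$ (so that Lemma~\ref{0425lem1} applies) while $A_{(1,k)}$ survives with an invertible coefficient; this, together with the one appeal to \eqref{0422eq1} ensuring the degree-$(2d-2)$ part of $J$ receives no contribution beyond the self-interaction of $f_{d}$ and $g_{d}$, carries the whole argument.
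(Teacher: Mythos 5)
Your proposal is correct and follows essentially the same route as the paper: both identify the degree-$(2d-2)$ part of $J$ as the self-Jacobian of $f_d,g_d$ via \eqref{0422eq1}, expand $J_{2d-m,m-2}$ as a linear combination of the minors $A_{(i,m+1-i)}$, and induct, using Lemma~\ref{0425lem1} to kill every term except $A_{(1,m)}$, which survives with a nonzero coefficient. The only difference is cosmetic: you symmetrize the sum over $i<j$, whereas the paper leaves it in the form $-\sum_{i=1}^{m-1}(d-i+1)(m-i)A_{(i,m-i+1)}$.
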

\begin{proof}
We use induction on $m$. If $m=1$ then trivial. Suppose that the statement holds for $m-1$. Due to (\ref{0422eq1}),  $J_{2d-m,m-2}$ is the coefficient of $x^{2d-m}y^{m-2}$ in $\frac{\partial f_{d}}{\partial x} \frac{\partial g_{d}}{\partial y}  - \frac{\partial f_{d}}{\partial y} \frac{\partial g_{d}}{\partial x}$.  Looking at 
$$\aligned 
\frac{\partial f_{d}}{\partial x}&= 1\cdot d c_{d,1} x^{d-1} + 2(d-1)c_{d-1,2}x^{d-2}y+3(d-2)c_{d-2,3}x^{d-3}y^2+\cdots,\\
-\frac{\partial g_{d}}{\partial y}&= 1\cdot dc_{d,1}x^{d-1} + 2(d-1)c_{d-1,2}x^{d-2}y+3(d-2)c_{d-2,3}x^{d-3}y^2+\cdots,\\
\frac{\partial f_{d}}{\partial y}&= 1\cdot 2 c_{d-1,2} x^{d-1} + 2\cdot 3 c_{d-2,3}x^{d-2}y+3\cdot 4 c_{d-3,4}x^{d-3}y^2+\cdots,\\
-\frac{\partial g_{d}}{\partial x}&=d(d+1)c_{d+1,0}x^{d-1}  +  (d-1)dc_{d,1}x^{d-2}y+ (d-2)(d-1)c_{d-1,2}x^{d-3}y^2+\cdots,  \endaligned$$
we see that 
$$J_{2d-m,m-2}= -\sum_{i=1}^{m-1} (d-i+1)(m-i)A_{(i,m-i+1)}$$ By induction and Lemma~\ref{0425lem1}, we have $A_{(2,m-1)}=\cdots=A_{(m-1,2)}=0$. Since $J_{2d-m,m-2}=0$, we obtain $A_{(1,m)} =0$.
\end{proof}

Now we will prove that $2\times 2$ minors of (\ref{0424matrix}) are all 0.
%For $p\in \{1,...,n-1\}$, let\begin{equation*}\label{0424matrpx2}
%B_p=\left(\begin{array}{cccc}
%c_{d_p,1} & 2c_{d_p-1,2} & \cdots & (d_p+1)c_{0,d_p+1} \\
% (d_p+1)c_{d_p+1,0} & d_pc_{d_p,1} & \cdots & c_{1,d_p} 
%\end{array}\right).\end{equation*}
Fix two distinct integers $p,q \in \{1,...,n-1\}$. Let $B_{(i,j)}$ be the determinant of the matrix obtained by concatenating the $i$-th column in $A_p$ and $j$-th column in $A_q$, that is, 
$$B_{(i,j)}=
\det\left(\begin{array}{cc} ic_{d_p-i+1,i} &jc_{d_q-j+1,j} \\(d_p+2-i)c_{d_p-i+2,i-1} & (d_q+2-j)c_{d_q-j+2,j-1}\end{array}\right).$$
For simplicity, let $d=d_p$ and $e=d_q$. %Without loss of generality, assume $d>e$.

\begin{lem}
$
B_{(d+1,e+1)}=B_{(1,1)}=0.
$
\end{lem}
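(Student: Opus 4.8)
The plan is to combine Proposition~\ref{prop1} (which forces each $A_p$ to have rank $\le 1$) with two further consequences of $J\in k\setminus\{0\}$ extracted via (\ref{0422eq1}), one from the degree pair $\{d,1\}$ and one from $\{d,e\}$. First I would dispose of the degenerate case: if $A_p=0$ or $A_q=0$, then the first column (used in $B_{(1,1)}$) or the last column (used in $B_{(d+1,e+1)}$) coming from that matrix is identically zero, so both minors vanish. Assume therefore $A_p\neq0\neq A_q$. The two rows of $A_p$ are precisely the coefficient sequences of $f_{d}$ and of $-g_{d}$, read off from the displayed formulas for $f$ and $g$; since $\rk A_p\le1$, these rows are proportional, so there is a nonzero homogeneous form $h_p$ of degree $d$ and constants $(a_p,b_p)\neq(0,0)$ with $f_{d}=a_ph_p$ and $g_{d}=b_ph_p$. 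Likewise $f_{e}=a_qh_q$ and $g_{e}=b_qh_q$ with $\deg h_q=e$ and $(a_q,b_q)\neq(0,0)$.

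Next I would pin down the shape of $h_p$ and $h_q$. The identity $\partial f_{d}/\partial x+\partial g_{d}/\partial y=0$ holds by the very construction of the coefficients $c_{\bullet,\bullet}$ (it is (\ref{0422eq2}) read backwards; equivalently it is the degree-$(d-1)$ component of $J$, which (\ref{0422eq1}) isolates as $\partial_x f_{d}+\partial_y g_{d}$ and which vanishes because $J$ is constant). Substituting $f_{d}=a_ph_p$, $g_{d}=b_ph_p$ gives $a_p\,\partial_x h_p+b_p\,\partial_y h_p=0$. As $h_p$ is homogeneous and $(a_p,b_p)\neq0$, this directional-derivative condition forces $h_p=\gamma_p\,(b_px-a_py)^{\,d}$ for some $\gamma_p\neq0$, and symmetrically $h_q=\gamma_q\,(b_qx-a_qy)^{\,e}$.

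Then I would use the pair $\{d,e\}$. By (\ref{0422eq1}) the degree-$(d+e-2)$ component of $J$ is exactly the single cross term $\bigl(\partial_x f_{d}\,\partial_y g_{e}-\partial_y f_{d}\,\partial_x g_{e}\bigr)+\bigl(\partial_x f_{e}\,\partial_y g_{d}-\partial_y f_{e}\,\partial_x g_{d}\bigr)$, which vanishes since $J$ is constant and $d+e-2\ge 1$. A direct computation rewrites this as $(a_pb_q-a_qb_p)$ times the Jacobian $\partial_x h_p\,\partial_y h_q-\partial_y h_p\,\partial_x h_q$; inserting the explicit powers of linear forms collapses it to $\gamma_p\gamma_q\,de\,(a_pb_q-a_qb_p)^2\,(b_px-a_py)^{d-1}(b_qx-a_qy)^{e-1}$. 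Since $\gamma_p\gamma_q\,de\neq0$ and the product of linear powers is a nonzero polynomial, this forces $a_pb_q-a_qb_p=0$.

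Finally I would conclude by substitution. Reading the coefficients of $x^{d},x^{e}$ off $g_d,f_e$ (and of $y^{d},y^{e}$ off $g_d,f_e$) from the normal forms gives $B_{(1,1)}=\gamma_p\gamma_q\,b_p^{d}b_q^{e}\,(a_qb_p-a_pb_q)$ and $B_{(d+1,e+1)}=(-1)^{d+e}\gamma_p\gamma_q\,a_p^{d}a_q^{e}\,(a_qb_p-a_pb_q)$, both zero by the previous step. The one genuine obstacle is that middle step: the bare relation $(a_pb_q-a_qb_p)\cdot(\text{Jacobian of }h_p,h_q)=0$ is not enough on its own, since the Jacobian could a priori vanish; it is precisely the linear-form normal form obtained from the $\{d,1\}$-pairing that converts it into the clean square $(a_pb_q-a_qb_p)^2=0$, from which both extremal minors fall out (indeed the same computation shows every $B_{(i,j)}$ is a multiple of $a_qb_p-a_pb_q$).
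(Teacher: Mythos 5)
Your proof is correct, but it takes a genuinely different route from the paper's. The paper disposes of the lemma with a single polynomial identity: it writes $B_{(d+1,e+1)}^2$ explicitly as a linear combination of $(A_p)_{(d,d+1)}$, $(A_q)_{(e,e+1)}$ and $J_{0,d+e-2}$, each already known to vanish by Proposition~\ref{prop1} and the constancy of $J$, and then gets $B_{(1,1)}$ by symmetry of indices. You instead extract structure first: $\mathrm{rank}\,A_p\le 1$ makes $f_{d}$ and $g_{d}$ proportional to a common form $h_p$; the $\{d,1\}$-pairing gives $\partial_x f_{d}+\partial_y g_{d}=0$ and hence $h_p=\gamma_p(b_px-a_py)^{d}$; and the $\{d,e\}$-pairing then collapses to $(a_pb_q-a_qb_p)^2$ times a manifestly nonzero polynomial, forcing $a_pb_q=a_qb_p$. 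The three vanishing inputs are exactly the ones the paper uses, but your packaging is more conceptual and in fact strictly stronger: since every column of $A_p$ is a scalar multiple of $(a_p,-b_p)^{T}$ and every column of $A_q$ a multiple of $(a_q,-b_q)^{T}$, every mixed minor $B_{(i,j)}$ is a multiple of $a_qb_p-a_pb_q$, so your argument kills all of them at once and would let you bypass the chain of eliminations occupying the remainder of Section~\ref{proofsection}. What the paper's version buys is a two-line verification requiring no normal form and no case analysis; what yours buys is an explanation of why that identity holds, plus the shortcut. Two points to make explicit in a final write-up: the normal form for $h_p$ uses only Proposition~\ref{prop1} and the components of $J$ in degrees $d-1$ and $d+e-2$, both positive since $d,e\ge 2$, so there is no circularity with the mixed minors you are proving zero; and dividing by $de$ and asserting $\gamma_p\gamma_q\,de\neq 0$ uses the standing hypothesis that $k$ has characteristic zero.
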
\begin{proof} This is because
$$\aligned
&B_{(d+1,e+1)}^2\\
=&((d+1)c_{0,d+1} c_{1,e} - (e+1)c_{1,d}c_{0,e+1})^2\\
=&(ec_{1,e}^2 -2(e+1)c_{0,e+1}c_{2,e-1})\frac{(d+1)^2}{e}c_{0,d+1}^2\\
&+(dc_{1,d}^2 -2(d+1)c_{0,d+1}c_{2,d-1})\frac{(e+1)^2}{d}c_{0,e+1}^2\\
&+ ( 2(d+1)dc_{0,d+1}c_{2,e-1} - 2dec_{1,d}c_{1,e} + 2(e+1)ec_{2,d-1}c_{0,e+1})\frac{(d+1)(e+1)}{de}c_{0,d+1}c_{0,e+1}\\
=&(A_q)_{(e,e+1)}\frac{(d+1)^2}{e}c_{0,d+1}^2+(A_p)_{(d,d+1)}\frac{(e+1)^2}{d}c_{0,e+1}^2+J_{0,d+e-2}\frac{(d+1)(e+1)}{de}c_{0,d+1}c_{0,e+1}\\
=&0,
\endaligned$$where the last equality follows from Proposition~\ref{prop1} and $J_{0,d+e-2}=0$. Similarly (by symmetry of indices), we obtain $B_{(1,1)}=0$.
\end{proof}

For any $m\in\{1,...,e+1\}$, it is elementary to check that all $2\times 2$ minors of 
$$
\left(\begin{array}{cc}
dB_{(d+1,m)} & B_{(d,m)}\\
(d-1)B_{(d,m)} & 2B_{(d-1,m)} \\
(d-2)B_{(d-1,m)} &   3B_{(d-2,m)} \\
\vdots & \vdots \\
B_{(2,m)} &   dB_{(1,m)} \\
\end{array}\right)$$
are equal to 0.
For example, $$\aligned
&2dB_{(d+1,e+1)}B_{(d-1,e+1)}\\
=&2d((d+1)c_{0,d+1}c_{1,e}-(e+1)c_{1,d}c_{0,e+1}) ((d-1)c_{2,d-1}c_{1,e}-3(e+1)c_{3,d-2}c_{0,e+1})\\
=&d(d-1)(2(d+1)c_{0,d+1}c_{2,d-1} - dc_{1,d}c_{1,d} )c_{1,e}c_{1,e}\\
& + (d-1)(dc_{1,d}c_{1,e}-2(e+1)c_{2,d-1}c_{0,e+1})(dc_{1,d}c_{1,e}-2(e+1)c_{2,d-1}c_{0,e+1})\\
& + 2d(e+1)((d-1)c_{2,d-1}c_{1,d} - 3(d+1)c_{3,d-2}c_{0,d+1} )c_{0,e+1}c_{1,e}\\
& + 2(e+1)^2(3dc_{1,d}c_{3,d-2}-2(d-1)c_{2,d-1}c_{2,d-1})c_{0,e+1}c_{0,e+1}\\
=&-d(d-1)(A_p)_{(d,d+1)}c_{1,e}c_{1,e}\\
&+ (d-1)B_{(d,e+1)}B_{(d,e+1)}\\
& + 2d(e+1)(A_p)_{(d-1,d+1)}c_{0,e+1}c_{1,e}\\
& - 2(e+1)^2(A_p)_{(d-1,d)}c_{0,e+1}c_{0,e+1}
\endaligned$$
implies that  $2dB_{(d+1,e+1)}B_{(d-1,e+1)} - (d-1)B_{(d,e+1)}B_{(d,e+1)}=0$, which is a consequence of Proposition~\ref{prop1}.

Then $
B_{(d+1,e+1)}=0
$ implies that $
B_{(i,e+1)}=0$ for all $2\leq i\leq d$. Note that the coefficients of $y^{d+e-2}$ in  $\left(\frac{\partial f_{d}}{\partial x} \frac{\partial g_{e}}{\partial y}  - \frac{\partial f_{d}}{\partial y} \frac{\partial g_{e}}{\partial x}\right)$  and $\left(\frac{\partial f_{e}}{\partial x} \frac{\partial g_{d}}{\partial y}  - \frac{\partial f_{e}}{\partial y} \frac{\partial g_{d}}{\partial x}\right)$ are the only ones that contribute to $J_{0,d+e-2}$, because of (\ref{0422eq1}). Then we can see that  $J_{0,d+e-2}$ is a linear combination of $B_{(d+1,e)}$ and $B_{(d,e+1)}$ with nonzero coefficients, so we get $B_{(d+1,e)}=0$. Then this implies that $
B_{(i,e)}=0$ for all $2\leq i\leq d$. Since $J_{1,d+e-3}$ is a linear combination of $B_{(d+1,e-1)}$, $B_{(d,e)}$ and $B_{(d-1,e+1)}$ with nonzero coefficients, we get $B_{(d+1,e-1)}=0$. Repeating this argument, we get $
B_{(i,j)}=0$ for all $2\leq i\leq d+1$ and $1\leq j\leq e+1$. Similarly (by symmetry of indices), we obtain $B_{(1,j)}=0$ for all  $1\leq j\leq e+1$. The proof is completed.

\end{document}